\newcommand{\tensor}{\otimes}
\newcommand{\colim}{\operatorname{colim}}
\newcommand{\Spec}{\operatorname{Spec}}
\newcommand{\isomt}{{\stackrel{{\scriptscriptstyle{\sim}}}{\;\rightarrow\;}}}
\newcommand{\sma}{{\scriptstyle{\wedge}}}
\renewcommand{\O}{{\mathcal O}}
\renewcommand{\hom}{\operatorname{Hom}}
\newcommand{\cplx}{{\mathbb C}}
\newcommand{\Z}{{\mathbb Z}}
\newcommand{\aone}{{\mathbb A}^1}
\newcommand{\pone}{{\mathbb P}^1}
\newcommand{\ho}[1]{\mathscr{H}({#1})}
\newcommand{\DM}{{\mathbf{DM}}}
\newcommand{\bpi}{\bm{\pi}}
\newcommand{\et}{\mathrm{\acute et}}
\newcommand{\SH}{{\mathbf{SH}}}
\newcommand{\Sm}{\mathrm{Sm}}
\newcommand{\Spc}{\mathrm{Spc}}
\newcommand{\K}{{{\mathbf K}}}
\renewcommand{\H}{{{\mathbf H}}}
\newcommand{\Addresses}{{
  \bigskip
  \footnotesize

  A.~Asok, \textsc{Department of Mathematics, University of Southern California, 3620 S Vermont Ave KAP 104,
    Los Angeles, CA 90089-2532, United States;} \textit{E-mail address:} \url{asok@usc.edu}

}}
\newcounter{intro}
\theoremstyle{plain}
\newtheorem{thm}{Theorem}[subsection]
\newtheorem{lem}[thm]{Lemma}
\newtheorem{cor}[thm]{Corollary}
\newtheorem{prop}[thm]{Proposition}
\newtheorem*{claim*}{Claim}  
\newtheorem*{thm*}{Theorem}
\newtheorem*{problem*}{Problem}
\newtheorem{thmintro}{Theorem}
\newtheorem{propintro}[thmintro]{Proposition}
\theoremstyle{definition}
\newtheorem{defn}[thm]{Definition}
\theoremstyle{remark}
\newtheorem{rem}[thm]{Remark}
\newtheorem{ex}[thm]{Example}
\numberwithin{equation}{section}
\begin{document}
\pagestyle{fancy}
\renewcommand{\sectionmark}[1]{\markright{\thesection\ #1}}
\fancyhead{}
\fancyhead[LO,R]{\bfseries\footnotesize\thepage}
\fancyhead[LE]{\bfseries\footnotesize\rightmark}
\fancyhead[RO]{\bfseries\footnotesize\rightmark}
\chead[]{}
\cfoot[]{}
\setlength{\headheight}{1cm}

\author{Aravind Asok\thanks{Aravind Asok was partially supported by National Science Foundation Award DMS-1254892.}}

\title{{\bf Stably $\aone$-connected varieties \\ and universal triviality of $CH_0$}}
\date{}
\maketitle

\begin{abstract}
We study the relationship between several notions of connectedness arising in $\aone$-homotopy theory of smooth schemes over a field $k$: $\aone$-connectedness, stable $\aone$-connectedness and motivic connectedness, and we discuss the relationship between these notations and rationality properties of algebraic varieties.  Motivically connected smooth proper $k$-varieties are precisely those with universally trivial $CH_0$.  We show that stable $\aone$-connectedness coincides with motivic connectedness, under suitable hypotheses on $k$.  Then, we observe that there exist stably $\aone$-connected smooth proper varieties over the field of complex numbers that are not $\aone$-connected.
\end{abstract}

\begin{flushright}
\begin{small}
{\em To Chuck Weibel on the occasion of his $65^{th}$ birthday.}
\end{small}
\end{flushright}


\section{Introduction}
Suppose $k$ is a field, and $X$ is a connected smooth projective $k$-variety of dimension $d$.  Recall that $X$ is called {\em stably $k$-rational} if there exists an integer $m \geq 0$ such that $k(X)(t_1,\ldots,t_m)$ is a purely transcendental extension of $k$.  Determining whether a ``nearly rational" (e.g., rationally connected) variety is (not) stably $k$-rational is a rather subtle problem in general.  

It has been know for some time that if $X$ is a stably $k$-rational variety, then the Chow group of $0$-cycles is {\em universally trivial}: for every extension $L/k$, the degree map $CH_0(X_L) \to \Z$ is an isomorphism (see, e.g., \cite{Merkurjev}).  The condition of universal triviality of $CH_0$ for a smooth projective variety $X$ is equivalent to the existence of a Chow-theoretic decomposition of the diagonal (see, e.g., \cite{VoisinDiagDecomp} for general context or \cite[Proposition 1.4]{CTP} for this statement).  Recently, Voisin has shown, using degeneration techniques, that studying universal triviality of $CH_0$ is fruitful for disproving (stable) rationality of smooth proper varieties \cite{VoisinSolids}.  Her method was refined and extended by a number of authors \cite{CTP, Totaro} and we now know much more about, e.g., the failure of stable rationality of hypersurfaces in projective spaces.

This paper, inspired by some relationships between rationality questions and $\aone$-homotopy theory explored in \cite{AM}, is centered around the following question: are there invariants of ``homotopic nature" intermediate between universal triviality of $CH_0$ and stable rationality?  To explain the source of this question we observe that, using duality in Voevodsky's derived category of motives in $\DM_k$ (see, e.g., \cite{MVW}), universal triviality of $CH_0$ of $X$ is equivalent to the motive ${\mathbf M}(X)$ having trivial zeroth homology sheaf (see Lemma \ref{lem:ch0universal} for a proof).  Thus, universal triviality of $CH_0$ for a smooth projective variety $X$ is equivalent to the condition of being {\em motivically connected} (see Definition \ref{defn:connectedness}).  On the other hand, in \cite[Theorem 2.3.6]{AM}, it was observed that stably $k$-rational varieties, over fields $k$ having characteristic zero, are necessarily connected from the standpoint of the Morel-Voevodsky unstable $\aone$-homotopy category \cite{MV}.

If we write $\ho{k}$ for the Morel-Voevodsky $\aone$-homotopy category, sending a smooth scheme $X$ to its motive ${\mathbf M}(X)$ factors through a Hurewicz style functor $\Sm_k \to \ho{k} \to \DM_k$. However, as in topology, the Hurewicz homomorphism factors through a stabilization with respect to suspension:
\[
\Sm_k \longrightarrow \ho{k} \longrightarrow \SH_k \longrightarrow \DM_k;
\]
here $\SH_k$ is the stable $\aone$-homotopy category of $\pone$-spectra (see, e.g., \cite{Jardine}).

It makes sense to ask whether the object in $\SH_k$ determined by a smooth scheme $k$ (given by a $\pone$-suspension spectrum) is ``connected" and we will call a smooth projective variety $X$ {\em stably $\aone$-connected} if it is ``connected" in this sense (again, see Definition \ref{defn:connectedness}).  One has the following sequence of implications relating these various notions of connectedness for smooth projective varieties over a field $k$ having characteristic $0$ (see Lemma \ref{lem:implications} and the discussion that precedes it for more precise statements):\newline
\begin{center}
\{stably rational\} $\Longrightarrow$ \{$\aone$-connected\} $\Longrightarrow$ \{stably $\aone$-connected\} $\Longrightarrow$ \{motivically connected\}.\newline
\end{center}
We can refine the vague question posed above to the following: are these implications strict?   The main goal of this note is to answer this question.

\begin{thmintro}[See Theorem~\ref{thm:main}]
\label{thmintro:main}
If $k$ is an infinite perfect field having finite $2$-cohomological dimension, then a smooth proper $k$-variety $X$ is motivically connected if and only if it is stably $\aone$-connected.
\end{thmintro}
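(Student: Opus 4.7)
The implications reviewed in the introduction already give that stable $\aone$-connectedness implies motivic connectedness, so the substantive content of the theorem is the converse under the stated hypotheses on $k$. The plan is to compare the zeroth stable $\aone$-homotopy sheaf of the reduced $\pone$-suspension spectrum of $X$ with the zeroth motivic homology sheaf of $\M(X)$ via the Hurewicz-type functor $\SH_k \to \DM_k$, and to control the kernel and cokernel of this comparison by means of the Hopf element $\eta$.

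By Morel's computation, the heart of the homotopy $t$-structure on $\SH_k$ is the category of homotopy modules, which are in particular sheaves of modules over the Milnor-Witt $K$-theory sheaf $\K_*^{\MW}$. The functor $\SH_k \to \DM_k$ descends, on hearts, to the operation of killing $\eta$, i.e., passage from $\K_*^{\MW}$-modules to $\K_*^{\M}$-modules. Consequently, the Hurewicz comparison $\bpi_0(\Sigma^{\infty} X_+) \to H_0(\M(X))$ has kernel and cokernel which are $\eta$-torsion. Quantitatively, smashing $\Sigma^{\infty} X_+$ with the Hopf cofiber sequence and taking the associated long exact sequence in stable $\aone$-$\bpi_0$ identifies the discrepancy between $\bpi_0(\Sigma^{\infty} X_+)$ and $H_0(\M(X))$ with cokernels and kernels of multiplication by $\eta$ on nearby sheaves; in particular, a finite power of $\eta$ annihilates the obstruction.

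The concluding step converts qualitative $\eta$-torsion into outright vanishing. The Arason-Pfister Hauptsatz together with Voevodsky's resolution of the Milnor conjecture imply that if $k$ has 2-cohomological dimension at most $n$, then for every finitely generated extension $L/k$ the power $I^{n+1}(L)$ of the fundamental ideal vanishes; via Morel's identification, powers of $\eta$ acting on homotopy modules correspond to powers of $I$, so $\eta$ acts nilpotently on sections over such $L$. Combined with the previous step, and with the fact that over an infinite perfect field a strictly $\aone$-invariant Nisnevich sheaf is determined by its sections on finitely generated field extensions, one deduces: vanishing of $H_0(\M(X))$ (motivic connectedness) forces vanishing of $\bpi_0(\Sigma^{\infty} X_+)$ (stable $\aone$-connectedness). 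The main obstacle is the rigorous $\eta$-torsion analysis of the Hurewicz comparison for a general suspension spectrum---specifically, obtaining a uniform bound on ``how many powers of $\eta$'' are needed, so that the finite 2-cohomological dimension hypothesis can be effectively applied; without such uniformity, nilpotence of $\eta$ on sections does not immediately force the Hurewicz map to be an isomorphism.
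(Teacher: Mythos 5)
Your high-level strategy is in the right spirit, and indeed it is essentially the slice-filtration viewpoint that the paper flags in Remark~\ref{rem:slice} as an ``explanation'' of the result.  However, there are two concrete gaps, one of which is an outright error.

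First, the assertion that if $\mathrm{cd}_2(k) \le n$ then $I^{n+1}(L) = 0$ for every finitely generated extension $L/k$ is false.  The $2$-cohomological dimension grows with transcendence degree: for $k = \cplx$ one has $\mathrm{cd}_2(\cplx(x_1,\dots,x_m)) = m$, and Arason--Pfister only gives $I^{m+1}=0$, so there is \emph{no} uniform $N$ with $I^N(L)=0$ for all finitely generated $L/k$.  Your own worry at the end about ``obtaining a uniform bound on how many powers of $\eta$ are needed'' is therefore well-founded; the premise that such a bound exists does not hold.  The way the paper avoids this is by working with one finitely generated $L$ at a time: $\pi_0^{s\aone}(X)$ (being a strictly $\aone$-invariant Nisnevich sheaf) is detected on sections over finitely generated fields, and for each fixed $L$ the relevant (truncated Pardon) spectral sequence in $E_2^{p,q}=H^p(X_L,\K^M_{p+q}/2)$ is bounded for that $L$; no uniformity over $L$ is ever invoked.

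Second, and more serious conceptually: controlling the kernel and cokernel of the Hurewicz comparison ``by $\eta$-torsion'' is not, by itself, a closing argument.  Even granting the nilpotence of $\eta$ on sections, one must show that the $I$-adic associated graded of the comparison $\pi_0^{s\aone}(X)(L) \to \mathbf{GW}(L)$ is an isomorphism in each degree, and this is precisely where the hypothesis that $X$ is motivically connected has to enter.  The paper's proof does this through the identification (from \cite{AH}) $\pi_0^{s\aone}(X)(L) \cong \widetilde{CH}^n(X_L,\omega_{X_L}) = H^n(X_L,\K^{MW}_n(\omega_{X_L}))$, Morel's exact sequence $0 \to \mathbf{I}^{n+1} \to \K^{MW}_n \to \K^M_n \to 0$, and crucially a weak K\"unneth argument: because $\H_0^S(X)=\Z$, the pushforwards $H^n(X_L,\K^M_{n+j}/2) \to K^M_j/2(L)$ are isomorphisms for all $j$, and feeding this into the Pardon spectral sequence for pushforward yields the isomorphism on $H^n(X_L,\mathbf{I}^{n+1}(\omega_{X_L})) \to \mathbf{I}^1(L)$, whence (by the five lemma) on $\widetilde{CH}^n$.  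Your sketch never converts the standing hypothesis $\H_0^S(X)\to\Z$ into control over these graded pieces; without that step the $\eta$-torsion framework has no traction.

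In short: the Hopf-element heuristic is the right picture, but the proof needs (i) the concrete computation of $\pi_0^{s\aone}(X)$ as a twisted Chow--Witt group, (ii) the K\"unneth-type argument transferring motivic connectedness to the mod-$2$ Milnor K-theory columns, and (iii) a field-by-field rather than uniform application of the finite $\mathrm{cd}_2$ hypothesis.
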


The key tools in the proof are (i) the description of the zeroth stable $\aone$-homotopy sheaf of a smooth proper $k$-variety obtained in \cite{AH} in terms of the ``Chow-Witt group of zero cycles" and (ii) a spectral sequence relating a certain filtration on Chow-Witt groups from their description in terms of cohomology of Milnor-Witt K-theory sheaves and the filtration by powers of the fundamental ideal in Milnor-Witt K-theory; this spectral sequence, which is a slight modification of one considered by Pardon and Totaro, was described in \cite{AsokFaselSecondary}.  We refer the reader to Remark \ref{rem:slice} for an ``explanation" of the result involving Voevodsky's slice tower.

\begin{propintro}[See Proposition~\ref{prop:examples}]
\label{propintro:examples}
There exist stably $\aone$-connected smooth proper complex varieties of any dimension $\geq 2$ that fail to be $\aone$-connected.
\end{propintro}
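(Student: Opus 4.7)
The plan is to combine the main theorem with the existence of suitable smooth proper complex varieties drawn from the recent literature on universal triviality of $CH_0$. The first step is a direct reduction. Because $\mathbb{C}$ is infinite, perfect, and of $2$-cohomological dimension zero, Theorem~\ref{thmintro:main} applies and identifies stable $\aone$-connectedness of a smooth proper $\mathbb{C}$-variety with motivic connectedness, i.e., with universal triviality of $CH_0$. On the other hand, an $\aone$-connected smooth proper variety over any characteristic-zero field is rationally connected: by Morel's identification $\bpi_0^{\aone}(X)(\Spec L) \cong X(L)/R$ for finitely generated separable extensions $L/k$, vanishing of $\bpi_0^{\aone}(X)$ forces $X$ to be $R$-trivial over every such $L$, which in characteristic zero yields rational chain connectedness, and hence rational connectedness, of $X$. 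It therefore suffices to exhibit, for each $n \geq 2$, a smooth proper complex variety of dimension $n$ that has universally trivial $CH_0$ but is not rationally connected.

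For $n = 2$, take $S$ to be a smooth projective complex surface of nonnegative Kodaira dimension whose Chow group of zero-cycles is universally trivial. Natural candidates include Enriques surfaces or certain surfaces of general type with $p_g = q = 0$ for which the universally trivial form of Bloch's conjecture has been verified. Such a surface is not uniruled, hence not rationally connected; by the reduction above it is stably $\aone$-connected but not $\aone$-connected.

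For $n > 2$, set $X_n := S \times \Proj^{n-2}$. The projective bundle formula yields an isomorphism $CH_0((S \times \Proj^{n-2})_L) \cong CH_0(S_L)$ for every extension $L/\mathbb{C}$ that is compatible with the degree map (the inverse is induced by the inclusion $x \mapsto (x,p_0)$ for any fixed $\mathbb{C}$-point $p_0 \in \Proj^{n-2}$), so universal triviality of $CH_0$ passes from $S$ to $X_n$. The projection $X_n \to S$ identifies $S$ with the maximal rationally connected quotient of $X_n$ (using that the maximal rationally connected quotient of a product is the product of the maximal rationally connected quotients, and that $\Proj^{n-2}$ is rationally connected), so $X_n$ is not rationally connected. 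Hence $X_n$ is stably $\aone$-connected but not $\aone$-connected.

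The only nontrivial input to the argument, and thus the main obstacle, is the existence of a surface $S$ as above. Universal triviality of $CH_0$ is substantially stronger than the vanishing of $CH_0(S)$ (predicted by Bloch's conjecture for surfaces with $p_g = 0$) and has only been established recently for particular classes of surfaces. All other steps are formal consequences of Theorem~\ref{thmintro:main}, Morel's computation of $\bpi_0^{\aone}$ in terms of $R$-equivalence, and the projective bundle formula for zero-cycles.
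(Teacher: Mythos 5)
Your overall strategy is essentially the same as the paper's: invoke Theorem~\ref{thmintro:main} (applicable because $\cplx$ has $2$-cohomological dimension zero) to reduce to finding a surface with universally trivial $CH_0$ that is not rationally connected, then raise dimension by taking products with rational varieties. The paper also runs through nonnegative Kodaira dimension to rule out $\aone$-connectedness and uses products $S \times X$ with $X$ $\aone$-connected; it uses the direct $R$-equivalence description of $\bpi_0^{\aone}$ on products rather than passing through the MRC fibration, but that is a cosmetic difference.

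There is, however, a concrete error in your proposed candidates. Enriques surfaces do \emph{not} have universally trivial $CH_0$. An Enriques surface $S$ has $H^3(S,\Z)_{\mathrm{tors}} \cong \Z/2$ (equivalently $\mathrm{Br}(S) \cong \Z/2$), and the unramified Brauer group is a stable birational invariant that vanishes for varieties with universally trivial $CH_0$; more directly, torsion in cohomology obstructs the Chow-theoretic decomposition of the diagonal. This is exactly why the Bloch--Srinivas criterion the paper cites (Lemma~\ref{lem:universaltrivialityofch0surfaces}) carries a torsion-freeness hypothesis on $H^*(S,\Z)$. Your alternative suggestion---surfaces of general type with $p_g=q=0$ for which universal $CH_0$-triviality has been established---is the right one and matches the paper, which uses Barlow surfaces via Voisin's result; but you should drop Enriques surfaces, since keeping them would give a false impression that such classical examples suffice.
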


Granted Theorem \ref{thmintro:main}, one knows that there exist stably $\aone$-connected smooth proper varieties over some non-algebraically closed fields (examples arising in the work of Parimala are discussed in Section \ref{ss:mainexample}).  Proposition \ref{propintro:examples} refines this observation by constructing minimal dimensional examples over $\cplx$.  It seems reasonable to expect that examples of stably $\aone$-connected varieties that are not $\aone$-connected exist over algebraically closed fields having positive characteristic as well.

Over fields $k$ having characteristic $0$ there exist examples of $\aone$-connected smooth proper $k$-varieties that are not stably $k$-rational (we explain the examples, which can be found in work of Colliot-Th\'el\`ene--Sansuc on rationality problems for tori,  in Example \ref{ex:nonstablyrationalvarieties}).  We do not know of such an example over an algebraically closed field.


\subsubsection*{Acknowledgements}
The results below were established in the course of the Fall 2015 edition of the Los Angeles algebraic geometry seminar, which focused on stable rationality.  In particular, the author would like to thank Burt Totaro for helpful discussions and comments on an early version of this paper and Sasha Merkurjev for pointing out Example \ref{ex:nonstablyrationalvarieties}.

\subsubsection*{Notation/Conventions}
Throughout this paper the letter $k$ will be used to denote a field.  By a smooth $k$-scheme, we will mean a scheme that is separated, smooth and has finite type over $\Spec k$.  We write $\Sm_k$ for the category of such objects.  By a sheaf on $\Sm_k$, we will always mean a sheaf for the Nisnevich topology.

\section{$\aone$-homotopic notions of connectedness}
Section \ref{ss:homotopycategories} recalls key points of homotopy theory of algebraic varieties in order to make definitions regarding connectivity, i.e., Definition \ref{defn:connectedness}.  Here, we also explain the precise link between ``motivic connectedness" and universal trivality of $CH_0$.  Section \ref{ss:maintheorem} contains the proof of the result that motivically connected smooth proper varieties are stably $\aone$-connected; this result appears as Theorem \ref{thm:main}.  Section \ref{ss:mainexample} contains an analysis of stably $\aone$-connected varieties that are not $\aone$-connected, which appears in Proposition \ref{prop:examples}.

\subsection{Homotopy categories in brief}
\label{ss:homotopycategories}
Write $\Spc_k$ for the category of simplicial presheaves on $\Sm_k$.  Sending a smooth scheme $X$ to the corresponding representable presheaf on $\Sm_k$ and viewing presheaves on $\Sm_k$ as simplicially constant simplicial presheaves, the Yoneda lemma shows that $\Sm_k \to \Spc_k$ is a fully-faithful embedding.  There is an $\aone$-local model structure on $\Spc_k$ \cite[\S 3]{MV}, and we write $\ho{k}$ for the corresponding homotopy category.  We write $\bpi_0^{\aone}(X)$ for the Nisnevich sheaf associated with the presheaf on $\Sm_k$ given by $U \mapsto \hom_{\ho{k}}(U,X)$ and $\ast$ for the representable presheaf corresponding to $\Spec k$.

We can consider the category of symmetric spectra valued in $\Spc_k$; this has a stable model structure and we write $\SH_k$ for the corresponding homotopy category.  A smooth scheme $X$ determines a suspension symmetric $\pone$-spectrum, denoted $\Sigma^{\infty}_{\pone} X_+$, and sending $X \mapsto \Sigma^{\infty}_{\pone} X_+$ extends to a functor $\ho{k} \longrightarrow \SH_k$.  We set $\mathbf{S}^0_k:= \Sigma^{\infty}_{\pone} \Spec k_+$; this is the unit object for a symmetric monoidal structure $\sma$ on $\SH_k$.  As usual, one may discuss notions of connectivity for spectra, and it is a theorem of F. Morel that, at least if $k$ is an infinite field, then $\Sigma^{\infty}_{\pone} X_+$ is a $(-1)$-connected spectrum (see \cite[Theorem 6.1.8]{MStable} and \cite[5.2.1-5.2.2]{MIntro}).  We define the sheaf of stable $\aone$-connected components, denoted $\bpi_0^{s\aone}(X)$, as the Nisnevich sheaf associated with the presheaf $U \mapsto \hom_{\SH_k}(\mathbf{S}^0_k \wedge \Sigma^{\infty}_{\pone} U_+,\Sigma^{\infty}_{\pone} X_+)$.  We set $\bpi_0^{s\aone}(\mathbf{S}^0_k) := \bpi_0^{s\aone}(\Spec k)$.   A celebrated result of Morel \cite[Theorem 6.43]{MField} shows that $\bpi_0^{s\aone}(\Spec k) = \mathbf{GW}$, the unramified Grothendieck-Witt sheaf (see also \cite[\S 6]{Mpi0}).

Voevodsky's category $\DM_k$ is discussed in, e.g., \cite{MVW}.  The functor sending a smooth scheme $X$ to its corresponding representable presheaf with transfers extends to a functor $\SH_k \to \DM_k$.\footnote{Strictly speaking, this is a lie, and we should consider an analog of $\DM_k$ constructed with unbounded complexes.  However, all the objects we consider will be highly connected so no harm is done.}  We write $\mathbf{M}(X)$--the motive of $X$--for the object in $\DM_k$ corresponding to a smooth scheme $X$.  The object $\mathbf{M}(X)$ has an explicit model as a bounded below complex of presheaves with transfers.  By definition, the homology sheaves of this complex (viewed simply as sheaves of abelian groups) are concentrated in degrees $> -1$.  The zeroth homology sheaf $H_0(\mathbf{M}(X))$ is, by definition, the zeroth Suslin homology sheaf $\H_0^S(X)$.

With this notation and terminology, we may introduce more precisely the various notions of connectedness we consider.

\begin{defn}
\label{defn:connectedness}
Suppose $X$ is a smooth $k$-scheme.  We say that $X$ is
\begin{enumerate}[noitemsep,topsep=1pt]
\item {\em $\aone$-connected} if $\bpi_0^{\aone}(X) \cong \ast$;
\item {\em stably $\aone$-connected} if $\bpi_0^{s\aone}(X) \to \bpi_0^{s\aone}(\mathbf{S}^0_k)$ is an isomorphism; or
\item {\em motivically connected} if the canonical map $\H_0^S(X) \to \Z$ is an isomorphism.
\end{enumerate}
\end{defn}

It was observed in \cite[Theorem 2.3.6]{AM} that if $k$ has characteristic $0$, then stably $k$-rational smooth proper $k$-varieties are $\aone$-connected.  Note that the conclusion of \cite[Theorem 2.3.6]{AM} is also true if ``weak factorization" holds over $k$.  Thus, for example, one concludes that smooth proper $k$-rational surfaces over an arbitrary field are $\aone$-connected.  With this in mind, the relationship between the notions in Definition \ref{defn:connectedness} is summarized in the following result.

\begin{lem}
\label{lem:implications}
Suppose $X$ is a smooth proper $k$-variety over an infinite field $k$.
\begin{enumerate}[noitemsep,topsep=1pt]
\item If $X$ is $\aone$-connected, then $X$ is stably $\aone$-connected.
\item If $X$ is stably $\aone$-connected, then $X$ is motivically connected.
\end{enumerate}
\end{lem}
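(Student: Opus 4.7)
The plan is to prove the two implications independently, each via a stable splitting of $\Sigma^{\infty}_{\pone} X_+$ off the sphere spectrum together with control of the complementary summand.

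For (1), I would first note that an $\aone$-connected smooth proper variety over an infinite perfect field admits a rational point: combining Morel's identification $\bpi_0^{\aone}(X)(\Spec k) = X(k)/R$ for smooth proper $X$ with $\bpi_0^{\aone}(X) = \ast$ forces $X(k)/R = \ast$, hence $X(k) \neq \emptyset$. Using such a basepoint $x \in X(k)$ to split the inclusion $\Spec k_+ \hookrightarrow X_+$ via the structure map yields a decomposition $\Sigma^{\infty}_{\pone} X_+ \simeq \mathbf{S}^0_k \vee \Sigma^{\infty}_{\pone}(X,x)$ in $\SH_k$. Since $(X,x)$ is pointed and $\aone$-connected, Morel's pointed stable $\aone$-connectivity theorem makes $\Sigma^{\infty}_{\pone}(X,x)$ $0$-connected in $\SH_k$, killing its $\bpi_0^{s\aone}$. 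Reading off the splitting then gives the claim.

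For (2), my plan is to upgrade the hypothesized isomorphism $p_\ast \colon \bpi_0^{s\aone}(X) \isomto \mathbf{GW}$ (where $p \colon \Sigma^{\infty}_{\pone} X_+ \to \mathbf{S}^0_k$ is the structure map) to an actual stable section $s$, then propagate the resulting splitting across $\SH_k \to \DM_k$. Evaluating $p_\ast$ at $\Spec k$ identifies $[\mathbf{S}^0_k, \Sigma^{\infty}_{\pone} X_+]_{\SH_k}$ with $GW(k) = [\mathbf{S}^0_k, \mathbf{S}^0_k]_{\SH_k}$ via $p$, and the preimage of $\langle 1 \rangle$ supplies a section $s$ with $p \circ s = \mathrm{id}$. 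A long-exact-sequence argument for the cofiber $C$ of $s$, using the $(-1)$-connectedness of suspension spectra already cited in the excerpt, will show that $C$ is $0$-connected in $\SH_k$. Then I would apply the symmetric monoidal left adjoint $\Phi \colon \SH_k \to \DM_k$: the section persists, yielding $\mathbf{M}(X) \simeq \Z \oplus \Phi(C)$ in $\DM_k$, and right $t$-exactness of $\Phi$ (homotopy $t$-structure to homological $t$-structure) transports $0$-connectedness from $C$ to $\Phi(C)$. Taking $H_0$ then extracts $\H_0^S(X) \cong \Z$.

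The hard part is citational rather than geometric: I must verify that the lemma's hypothesis (infinite $k$) matches Morel's technical prerequisites for both the identification of $\bpi_0^{\aone}$ with $R$-equivalence classes and the pointed stable connectivity theorem, and locate in the motivic literature a clean statement of right $t$-exactness for $\SH_k \to \DM_k$ with the conventions in use here. Once those foundational inputs are pinned down, the geometric content reduces to the existence of a rational point in (1) and of a stable section lifting $\langle 1 \rangle$ in (2).
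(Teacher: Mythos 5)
Your proposal is correct in broad outline, but it takes a genuinely different route from the paper. The paper's proof is short and categorical: it equips $\SH_k$ and $\DM_k$ with their homotopy $t$-structures, uses that $\bpi_0^{s\aone}(X)$ is \emph{initial} among homotopy modules receiving a map from $X$, defers (1) to an ``evident modification'' of \cite[Proposition 3.5]{ABirational}, and proves (2) by a Yoneda argument: the hypothesis forces $\mathbf{M}(k) \to \mathbf{M}(X)$ to be a bijection for every heart object $\mathbf{M}$, and since maps from $\bpi_0^{s\aone}(X)$ to strictly $\aone$-invariant sheaves with transfers factor uniquely through $\H_0^S(X)$, one concludes. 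Your argument is instead a splitting argument: in (1) you extract a rational point from $\aone$-connectedness, decompose $\Sigma^\infty_{\pone} X_+ \simeq \mathbf{S}^0_k \oplus \Sigma^\infty_{\pone}(X,x)$, and appeal to connectivity of the reduced summand; in (2) you lift the iso on $\bpi_0^{s\aone}$ to an honest section of $\Sigma^\infty_{\pone}X_+ \to \mathbf{S}^0_k$ and push the resulting direct sum decomposition through $\SH_k \to \DM_k$. Both routes are valid; yours is more explicitly geometric and avoids citing \cite{ABirational}.

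Two places where your argument needs more care than a one-line citation. First, in (1), the statement ``$(X,x)$ $\aone$-connected implies $\Sigma^\infty_{\pone}(X,x)$ is $0$-connected'' is not a direct quote of Morel's theorem as usually stated: the naive $\pone$-suspension spectrum of $(X,x)$ (a discrete simplicial presheaf) is only Nisnevich-$(-1)$-connected, not $0$-connected. One has to first replace $X$ by its unstable $\aone$-localization $\Laone X$, which \emph{is} Nisnevich-$0$-connected when $\bpi_0^{\aone}(X) = \ast$, observe $\Sigma^\infty_{\pone}(\Laone X)$ is the same object of $\SH_k$, and then invoke Morel's theorem that stable $\aone$-localization preserves connectivity. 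Be aware also that applying the homotopy-module long exact sequence to $\mathbf{S}^0 \to \Sigma^\infty X_+ \to \Sigma^\infty(X,x)$ directly identifies $\bpi_0^{s\aone}(\Sigma^\infty(X,x))$ with $\coker(\mathbf{GW} \to \bpi_0^{s\aone}(X))$, which is precisely the surjectivity you are trying to establish, so the connectivity input really is load-bearing and not a formality. Second, in (2), you should justify $t$-exactness of $\SH_k \to \DM_k$ in the direction you need; the cleanest way is probably to observe that $\Phi$ is $(-)\wedge\mathbf{H}\Z$ followed by the equivalence with $\mathbf{H}\Z$-modules, $\mathbf{H}\Z$ is $(-1)$-connected, and smashing with a $(-1)$-connected spectrum preserves $0$-connectedness. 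As you note these are citational; but they are exactly the points where a reviewer would push back, so pin them down. Finally, you wrote ``infinite perfect'' where the lemma only assumes ``infinite''; unless your argument really uses perfectness, keep the hypothesis as stated.
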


\begin{proof}
These results are essentially consequences of Morel's stable $\aone$-connectivity theorem (it is appeal to this result that forces us to impose the hypothesis that $k$ be infinite).  Indeed, using this result, one equips $\SH_k$ and $\DM_k$ with $t$-structures whose heart can be described explicitly: the heart of $\SH_k$ is the category of ``homotopy modules" \cite[Definition 5.2.4 and Theorem 5.2.6]{MIntro}, while the heart of $\DM_k$ is the category of strictly $\aone$-invariant sheaves with transfers.  The sheaf $\bpi_0^{s\aone}(X)$ is initial among homotopy modules admitting a map from $X$.  With this in mind, Point (1) follows from an evident modification of \cite[Proposition 3.5]{ABirational}.

For Point (2), assume $X$ is stably $\aone$-connected, so $\bpi_0^{s\aone}(X) \cong \mathbf{GW}$.  Now, by assumption, for any object $\mathbf{M}$ in the homotopy $t$-structure on $\SH_k$, the canonical map $\mathbf{M}(k) \to \mathbf{M}(X)$ is a bijection.  In particular, this is true for $\mathbf{M}$ arising from strictly $\aone$-invariant sheaves with transfers.  In that case, any map $\bpi_0^{s\aone}(X) \to \mathbf{M}$ factors uniquely through $\H_0^S(X)$ and we conclude by appealing to \cite[Lemma 3.3]{ABirational} and the Yoneda lemma.
\end{proof}

The following easy (and well-known) lemma provides the connection between connectivity in the sense just defined and universal triviality of $CH_0$ in the sense mentioned in the introduction; we include it here simply for completeness.

\begin{lem}
\label{lem:ch0universal}
If $X$ is a smooth proper $k$-scheme, then $X$ is motivically connected if and only if for every extension field $L/k$, $CH_0(X_L)$ is trivial.
\end{lem}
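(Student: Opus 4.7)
The plan is to match the sheaf-theoretic condition ``$\H_0^S(X)\to\Z$ is an isomorphism'' with the Chow-theoretic condition by way of the standard identification of Suslin homology with the Chow group of zero cycles for smooth proper varieties, and then to check the sheaf isomorphism ``on field points'' using that $\H_0^S(X)$ is a strictly $\aone$-invariant Nisnevich sheaf with transfers.

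First I would recall the duality identification. For $X$ smooth and proper of dimension $d$ over $k$, Poincar\'e duality in $\DM_k$ gives $\mathbf{M}(X)^{\vee}\cong \mathbf{M}(X)(-d)[-2d]$. Base-changing to any finitely generated field extension $L/k$ and applying duality, one converts the group
\[
H_0^S(X_L) \;=\; \hom_{\DM_L}(\Z,\mathbf{M}(X_L))
\]
into $\hom_{\DM_L}(\mathbf{M}(X_L),\Z(d)[2d]) = H^{2d,d}(X_L,\Z) = CH^d(X_L) = CH_0(X_L)$, and the canonical map to $\Z$ on the left corresponds to the degree map on the right. It is worth pointing out that smoothness and properness are both used essentially here, so one cannot expect the analogous identification to hold for general smooth $X$.

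Next I would invoke a standard fact about strictly $\aone$-invariant Nisnevich sheaves (with transfers): such a sheaf is determined by, and a morphism between two such sheaves is an isomorphism if and only if it is an isomorphism on, sections over finitely generated separable field extensions of $k$ (equivalently, function fields of smooth $k$-varieties), using Nisnevich--Gersten-type arguments to reduce from Nisnevich stalks to field points. Since $\H_0^S(X)$ is the $H_0$ of $\mathbf{M}(X)$ in the homotopy $t$-structure on $\DM_k$, it is strictly $\aone$-invariant. Combined with the duality step above, this shows that $\H_0^S(X)\to\Z$ is an isomorphism of sheaves if and only if the degree map $CH_0(X_L)\to\Z$ is an isomorphism for every finitely generated field extension $L/k$.

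Finally I would bootstrap to arbitrary $L/k$ using continuity of Chow groups: writing $L = \colim L_{\alpha}$ over the filtered system of finitely generated sub-extensions, spreading out cycles gives $CH_0(X_L) = \colim CH_0(X_{L_{\alpha}})$, so triviality of $CH_0(X_L)$ for all finitely generated $L/k$ is equivalent to triviality for all $L/k$. The main technical input is the duality identification of the first step; the remainder is essentially bookkeeping with the homotopy $t$-structure and the usual stalk-wise criterion for iso of strictly $\aone$-invariant sheaves.
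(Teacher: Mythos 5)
Your proposal follows essentially the same route as the paper: identify $\H_0^S(X)(\Spec L)$ with $CH_0(X_L)$ via Poincar\'e duality and cancellation for smooth proper $X$, reduce the sheaf isomorphism to a check on finitely generated field extensions using strict $\aone$-invariance of $\H_0^S(X)$, and pass to arbitrary extensions by the colimit of finitely generated sub-extensions. The argument is correct and matches the paper's proof in all essential respects.
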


\begin{proof}
The sheaf $\H_0^S(X)$ is strictly $\aone$-invariant, i.e., all of its cohomology presheaves are $\aone$-invariant.  As a consequence of this, one deduces that the sheaf $\H_0^S(X)$ is determined by its sections on finitely generated extensions of the base field.

If $X$ is an irreducible smooth proper $k$-scheme of dimension $d$, then one deduces that the following sequence of isomorphism hold using duality and the cancellation theorem \cite{VCancellation}:
\[
\begin{split}
\H_0^S(X)(\Spec L) &:= \hom_{\DM_L}(\Z,\mathbf{M}(X_L)) \\
&\cong \hom_{\DM_L}(\mathbf{M}(X_L)(-d)[-2d],\Z) \\
&\cong \hom_{\DM_L}(\mathbf{M}(X_L),\Z(d)[2d]) \\
&\cong CH^{d}(X_L) = CH_0(X_L).
\end{split}
\]
Now, if $\H_0^S(X)(\Spec L)$ is trivial, then $CH_0(X_L) \to \Z$ is an isomorphism.  Since we can write any extension as an increasing union of its finitely generated sub-extensions, we conclude that $CH_0(X_L) \to \Z$ is an isomorphism for extensions $L/k$ that are not necessarily finitely generated.  Conversely, if $CH_0(X_L) \to \Z$ is an isomorphism for all finitely generated extensions, then by the observation of the first paragraph, we conclude that $\H_0^S(X) \to \Z$ is an isomorphism of sheaves.
\end{proof}

\subsection{Stable $\aone$-connectedness vs. motivic connectedness}
\label{ss:maintheorem}
\begin{thm}
\label{thm:main}
Suppose $k$ is an infinite perfect field that has finite $2$-cohomological dimension.  If $X$ is a motivically connected smooth projective $k$-variety, then $X$ is stably $\aone$-connected.
\end{thm}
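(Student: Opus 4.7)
My plan is to verify the sheaf comparison $\bpi_0^{s\aone}(X) \to \bpi_0^{s\aone}(\mathbf{S}^0_k) = \mathbf{GW}$ by evaluating on sections over finitely generated field extensions $L/k$. Both sides are unramified strictly $\aone$-invariant sheaves (the source by Morel's stable connectivity theorem, as used in the proof of Lemma~\ref{lem:implications}), so it is enough to work stalk-by-stalk at generic points. The key point of this reduction is that any finitely generated $L/k$ inherits finite $2$-cohomological dimension from $k$, so the standing hypothesis is preserved at every point we need to inspect.

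I would next invoke the description of $\bpi_0^{s\aone}$ for smooth proper varieties from \cite{AH}: setting $d = \dim X$, the group $\bpi_0^{s\aone}(X)(\Spec L)$ is identified with a Chow-Witt group of zero cycles $\widetilde{CH}_0(X_L,\omega_{X_L/L}) = H^d(X_L, \mathbf{K}^{MW}_d(\omega_{X_L/L}))$, and the comparison map is the pushforward along $X_L \to \Spec L$ landing in $\widetilde{CH}_0(\Spec L) = GW(L)$. The problem is thereby transferred to showing that this pushforward is an isomorphism for every finitely generated $L/k$, granted that $X$ is motivically connected.

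I would then run the spectral sequence of \cite{AsokFaselSecondary} functorially on the morphism $X_L \to \Spec L$. Its associated graded is built from the ordinary Chow groups (via $\mathbf{K}^M_*$) and from the cohomology of the powers of the fundamental ideal $\mathbf{I}^n$, whose successive subquotients $\mathbf{I}^n/\mathbf{I}^{n+1}\cong \mathbf{K}^M_n/2$ are computed by the Milnor conjectures. Motivic connectedness, via Lemma~\ref{lem:ch0universal}, gives $CH_0(X_L) \cong \Z$, which matches the Chow-theoretic graded piece on $X_L$ with that on $\Spec L$; combined with the Gersten resolution this also yields $H^d(X_L, \mathbf{K}^M_d/2) \cong \Z/2$. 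The remaining content of the theorem is then to match the purely Witt-theoretic graded pieces, namely $H^d(X_L, \mathbf{I}^{d+j})$ with $H^0(\Spec L, \mathbf{I}^j)$ for $j \geq 1$, compatibly with the spectral sequence differentials.

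The hard part, and the place where finite $2$-cohomological dimension is essential, is this Witt-theoretic matching. Filtering $\mathbf{I}^{d+j}$ by further powers of $\mathbf{I}$ and applying Bloch-Kato to identify each $H^d(X_L, \mathbf{K}^M_{d+j}/2)$ with mod-$2$ motivic cohomology of $X_L$ in bidegree $(d, d+j)$, the finiteness of $\mathrm{cd}_2(L)$ (and hence of $\mathrm{cd}_2(X_L)$) forces all but finitely many of these pieces to vanish, so the tower of subquotients is effectively finite and the spectral sequence converges in finitely many nonzero columns. The surviving mod-$2$ pieces are again controlled by motivic connectedness through Bloch-Kato, and matching them level-by-level against the analogous pieces for $\Spec L$ yields an isomorphism on associated graded and, by a convergence argument for the spectral sequence in the finite-length situation, on the abutment. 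I expect the main technical subtlety to lie in verifying the compatibility of the twists $\omega_{X_L/L}$ with the spectral sequence filtration and in packaging the Bloch-Kato identifications uniformly across $j$.
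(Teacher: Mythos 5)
Your overall strategy coincides with the paper's: reduce to a stalkwise check over finitely generated extensions $L/k$, invoke the identification $\bpi_0^{s\aone}(X)(L) \cong H^d(X_L,\K^{MW}_d(\omega_{X_L}))$ from \cite{AH}, peel off the Milnor K-theory part via Morel's short exact sequence for $\K^{MW}_d$, filter $\mathbf{I}^{d+1}$ by further powers of the fundamental ideal, and then run the (truncated) Pardon spectral sequence from \cite{AsokFaselSecondary}, with finiteness of $\mathrm{cd}_2$ providing the boundedness needed for convergence and comparison of abutments. All of this matches.

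There is, however, a genuine gap at the single most substantive step. You need to show that the pushforward $H^d(X_L,\K^M_{d+j}/2) \to K^M_j/2(L)$ is an isomorphism for \emph{every} $j \geq 0$, not only $j=0$. For $j=0$ you correctly reduce to $CH_0(X_L)\cong\Z$, but for $j \geq 1$ you write that the groups are ``controlled by motivic connectedness through Bloch--Kato,'' which is not an argument: Bloch--Kato relates $\K^M_n/2$ to \'etale cohomology, but it does nothing to compare $H^d(X_L,\K^M_{d+j}/2)$ with $K^M_j/2(L)$. The paper's proof fills this exact hole by dualizing to rewrite $H^d(X_L,\K^M_{d+j}/2) \cong H^{2d+j,d+j}(X_L,\Z/2) \cong \hom_{\DM_L}(\Z,\mathbf{M}(X_L)\tensor\Z/2(j)[j])$ and then invoking the weak K\"unneth/connectivity estimate (transposed from \cite[Prop.~3.3.9]{AWW}) to compute $H_0(\mathbf{M}(X)\tensor\Z/2(j)[j]) \cong \H_0^S(X)\tensor^{\aone}\K^M_j/2$, which collapses to $\K^M_j/2$ precisely because $\H_0^S(X)=\Z$. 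Without this step (or an appeal to the cycle-module version, namely that universal $CH_0$-triviality forces $A_0(X_L;M)\cong M(L)$ for every Rost cycle module $M$, a result of Merkurjev), you cannot identify the $E_2$-columns of the two Pardon spectral sequences, and the subsequent ``matching level-by-level'' has nothing to hold it up. In short: your proposal has the right scaffolding but omits the duality-plus-K\"unneth (equivalently, cycle-module) argument that makes the graded comparison come to life, and Bloch--Kato is not a substitute for it.
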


\begin{proof}
Assume that $\H_0^S(X) \to \Z$ is an isomorphism.  We want to show that $\H_0^{s\aone}(X)\to \mathbf{GW}$ is an isomorphism.  Generalizing the first part of the proof of Lemma \ref{lem:ch0universal}, it was shown in \cite[Theorem 5.3.1]{AH} that $\pi_0^{s\aone}(X)(L) \isomt \widetilde{CH}^n(X_L,\omega_{X_L}) = H^n(X_L,\K^{MW}_n(\omega_{X_L}))$ in a fashion compatible with field extensions and functorial with respect to pushforwards.\footnote{The statement of \cite[Theorem 5.3.1]{AH} also imposes the additional hypothesis that $k$ has characteristic unequal to $2$.  This hypothesis was imposed there to appeal to results about Milnor-Witt cohomology; the necessary results were established in \cite{MField} in the case $k$ has characteristic $2$ as well, at least if $k$ is perfect.}  Here $\widetilde{CH}^n(X_L,\omega_{X_L})$ is the twisted Chow-Witt group introduced in \cite[\S 10]{FaselChowWitt}.

F. Morel showed that there is a short exact sequence of sheaves on the small Nisnevich (or Zariski) site of $X_L$ of the form
\[
0 \longrightarrow \mathbf{I}^{n+1}(\omega_{X_L}) \longrightarrow \K^{MW}_n(\omega_{X_L}) \longrightarrow \K^M_n \longrightarrow 0.
\]
Moreover, $\mathbf{I}^{n+1}(\omega_{X_L})$ admits a decreasing filtration $\mathbf{I}^{n+j}(\omega_{X_L}) \supset \mathbf{I}^{n+j+1}(\omega_{X_L})$ with successive subquotients identified, via the Milnor conjecture on quadratic forms, with $\K^M_{n+2}/2$; see \cite[\S 2.1]{AsokFaselSecondary} for all of this notation.\footnote{The careful reader may want to assume that $k$ has characteristic unequal to $2$.  There is a small problem in the fiber product presentation of the Milnor-Witt K-theory sheaf described in \cite{MPuissances} that is fixed in \cite{GSZ}.  As explained in \cite[p. 54]{MField}, these results can be extended to characteristic $2$ as well, replacing reference to \cite{MPuissances} by \cite{GSZ}.}  We now analyze the pushfoward along $X_L \to \Spec L$ in more detail.

We begin by analyzing the induced push-forward at the level of the graded pieces of the filtration.  The assumption $\H_0^S(X) \to \Z$ is an isomorphism implies that $CH_0(X_L) \cong H^n(X_L,\K^M_n) = \Z$.  The edge map in the hypercohomology spectral sequence for the motivic complexes $\Z(n)$ yields a map $H^{2n+1,,n+1}(X,\Z/2) \to H^{n}(X_L,\K^M_{n+j}/2)$; this map is an isomorphism by connectivity estimates \cite[Lemma 4.11]{VMilnor}.  Again, by duality
\[
H^{2n+j,,n+j}(X,\Z/2) = \hom_{\DM_k}(\mathbf{M}(X),\Z/2(n+j)[2n+j]) \cong \hom_{\DM}(\Z,\mathbf{M}(X) \tensor \Z/2(j)[j]).
\]
Because $\mathbf{M}(X)$ is $(-1)$-connected, the weak K\"unneth formula allows us to identify the first non-zero homology sheaf of $\mathbf{M}(X) \tensor \Z/2(j)[j]$ in terms of Voevodsky's tensor product of the first non-zero homology sheaves of the constituents; this result follows by transposing the proof of \cite[Proposition 3.3.9]{AWW} to $\DM_k$.  Thus, we conclude $H_0(\mathbf{M}(X)\tensor \Z/2(j)[j]) \cong \H_0^S(X) \tensor^{\aone} \K^M_j/2$.  In particular, evaluating on $L$ we conclude that $H_0(\mathbf{M}(X) \tensor \Z/2(1)[1])(L) \cong \K^M_j/2(L)$; once more observe that this identification is functorial with respect to field extensions. Thus, we conclude that the pushfoward map $H^{n}(X_L,\K^M_{n+j}/2) \isomt \K^M_j/2(L)$ is functorial with respect to field extensions.

Next, one sees that pushforward induces the following morphism of exact sequences
\[
\xymatrix{
H^n(X_L,\mathbf{I}^{n+1}(\omega_{X_L})) \ar[r]\ar[d]& H^n(X_L,\K^{MW}_n(\omega_{X_L})) \ar[r]\ar[d]& H^n(X_L,\K^{M}_n) \ar[r]\ar[d] & 0 \\
\ar[r] \mathbf{I}^1(L) \ar[r] & \mathbf{GW}(L) \ar[r] & \Z \ar[r] & 0.
}
\]
Note that the morphism one term to the left is simply the map $H^{n-1}(X_L,\K^M_n) \to 0$ since the group $H^{-1}(\Spec k,\K^M_0)$ is trivial by construction.  The two right vertical maps are isomorphisms.  Therefore, if the map $H^n(X_L,\mathbf{I}^{n+1}(\omega_{X_L})) \to \mathbf{I}^1(L)$ is an isomorphism, then by the five lemma it follows that the map $H^n(X_L,\K^{MW}_n(\omega_{X_L})) \to GW(L)$ is an isomorphism.

Finally, we analyze the pushforward $H^n(X_L,\mathbf{I}^{n+1}(\omega_{X_L})) \to \mathbf{I}^1(L)$ using the (truncated) Pardon spectral sequence from \cite[\S 2.2]{AsokFaselSecondary}; this is the spectral sequence induced by the filtration of the Gersten-Witt complex by sub-complexes corresponding to higher powers of the fundamental ideal.\footnote{When $k$ has characteristic $2$, one appeals to the Rost-Schmid complex described in \cite[Remark 5.13]{MField} instead of the Gersten-Witt complex.}  We observe only that the $d_r$-differential has bidegree $(1,r-1)$ for $r \geq 2$, and $E^{n,\ast}_2 = H^{n}(X_L,\K^M_{n+\ast}/2)$.  Since $k$ is assumed to have finite $2$-cohomological dimension, the truncated Pardon spectral sequence is bounded.  Note that since $X_L$ has dimension $n$, there are no outgoing differentials from the column $E^{n,\ast}$ so $H^n(X_L,\mathbf{I}^{n+1}(\omega_{X_L}))$ admits a filtration whose associated graded is described in terms of quotients of $H^{n}(X_L,\K^M_{n+\ast}/2)$.

Now, recall that pushfoward map $H^n(X_L,\mathbf{I}^{n+1}(\omega_{X_L})) \to \mathbf{I}^1(L)$ comes from a pushforward defined at the level of Gersten-Witt complexes \cite[Th\'eor\`eme 8.3.4 and Corollaire 10.4.5]{FaselChowWitt}; this complex-level pushforward respects the filtration by powers of the fundamental ideal and thus induces a morphism of truncated Pardon spectral sequences.  Since we already saw that the maps $H^{n}(X_L,\K^M_{n+j}/2) \to K^M_{j}/2(L)$ are isomorphisms for every $j \geq 0$, the morphism of Pardon spectral sequences induced by pushforward thus identifies the $n$-th column of the $E_2$-page of the truncated Pardon spectral sequence for $H^n(X_L,\mathbf{I}^{n+1}(\omega_{X_L}))$ with the $0$-th column of the $E_2$-page of the truncated Pardon spectral sequence for $H^0(\Spec L,\mathbf{I}^1)$.  In particular, we conclude that all incoming differentials on this column are trivial, and the isomorphism we hoped to construct is an immediate consequence.
\end{proof}

\begin{rem}
\label{rem:slice}
The aforementioned result should not be viewed as surprising.  Indeed, the proof given above can be viewed as an explicit form of an argument involving Voevodsky's slice filtration \cite{VoevodskySlice}.  Indeed, the zero slice of the sphere spectrum is Voevodsky's motivic Eilenberg-MacLane spectrum $\mathbf{H}\Z$ and each slice is a module over $\mathbf{H}\Z$ \cite{LevineSlice}.  If $k$ has finite cohomological dimension, the slice filtration of $\Sigma^{\infty}_{\pone} X_+$ yields a convergent spectral sequence allowing us to compute $\pi_0^{s\aone}(X)$ in terms of information from motivic cohomology \cite{LevineSliceConvergence}.  That the filtration by powers of the fundamental ideal coincides with the slice filtration is established in \cite{LevineGWSlice}.  We leave it to the interested reader to recast the result in those terms.  Note, however, that our assumption that $k$ has finite $2$-cohomological dimension uses slightly weaker hypotheses than the argument just sketched.
\end{rem}

\begin{rem}
It would be interesting to know whether Theorem \ref{thm:main} holds without the assumption that $k$ has finite $2$-cohomological dimension.
\end{rem}

In line with Remark \ref{rem:slice}, Theorem \ref{thm:main} admits an interpretation in the theory of birational motives developed in \cite{KahnSujathaI,KahnSujathaII}.  More precisely, combining \cite[Proposition 3.1.1]{KahnSujathaI}, Lemma \ref{lem:ch0universal} and Theorem \ref{thm:main}, we deduce the following result (see also \cite[Theorem 2.1]{TotaroClassifying} for other equivalent characterizations).

\begin{cor}
\label{cor:aoneconnectedtrivialbirationalmotive}
Assume $k$ is a field having finite $2$-cohomological dimension.  If $X$ is a smooth proper $k$-variety, then $X$ is stably $\aone$-connected if and only if the birational motive of $X$ is trivial.
\end{cor}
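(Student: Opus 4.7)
The plan is to chain together three previously established equivalences. First, by \cite[Proposition 3.1.1]{KahnSujathaI}, the birational motive of a smooth proper $k$-variety $X$ is trivial if and only if the degree map $CH_0(X_L) \to \Z$ is an isomorphism for every extension field $L/k$, i.e., if and only if $CH_0$ of $X$ is universally trivial. Second, Lemma \ref{lem:ch0universal} identifies this universal triviality of $CH_0$ with motivic connectedness of $X$, that is, with the condition that $\H_0^S(X) \to \Z$ is an isomorphism of Nisnevich sheaves. Third, under the hypothesis that $k$ has finite $2$-cohomological dimension (and the tacit infinite-perfect assumption required to apply the earlier results), motivic connectedness is equivalent to stable $\aone$-connectedness: the nontrivial direction, that motivic connectedness implies stable $\aone$-connectedness, is precisely Theorem \ref{thm:main}, while the converse is Lemma \ref{lem:implications}(2).

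Concatenating the three equivalences yields the corollary, so the proof is really just a matter of assembling ingredients. No essentially new argument is required: all the substantive work has already been done, with the main input being Theorem \ref{thm:main}, which is what allows us to translate the abelian-category statement that the birational motive is trivial into a statement formulated in the stable $\aone$-homotopy category $\SH_k$.

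The only obstacle is a small amount of hypothesis bookkeeping. One must confirm that \cite[Proposition 3.1.1]{KahnSujathaI} and Lemma \ref{lem:ch0universal} hold in the generality needed (both do, with no restriction on $k$), and that the (implicit) assumptions of Theorem \ref{thm:main} and Lemma \ref{lem:implications}---namely, that $k$ be infinite and perfect---are either inherited by the corollary or genuinely needed. Beyond this verification, the argument is formal.
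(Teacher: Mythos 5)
Your proof is correct and takes exactly the same route as the paper: chain together \cite[Proposition 3.1.1]{KahnSujathaI}, Lemma~\ref{lem:ch0universal}, and Theorem~\ref{thm:main} (with Lemma~\ref{lem:implications}(2) supplying the easy converse). Your observation about hypothesis bookkeeping is apt --- the corollary as stated omits the ``infinite perfect'' assumptions carried by Theorem~\ref{thm:main} and Lemma~\ref{lem:implications}, and these should properly be included (or shown redundant) for the argument to go through as written.
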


\subsection{$\aone$-connectedness vs. stable $\aone$-connectedness}
\label{ss:mainexample}
If $k$ is not algebraically closed, then it is known that one can find smooth proper $k$-varieties that are motivically connected but not $\aone$-connected since there are varieties that have a $0$-cycle of degree $1$ but no $k$-rational point (see \cite[Example 4.19]{ABirational} where an example due to Parimala is discussed).  We now improve this observation by producing minimal dimensional counterexamples over an algebraically closed field.   The moral of the story is that the idea of ``becoming connected after $\pone$-suspension is rather subtle".  We begin with the following lemma, which goes back to Bloch-Srinivas \cite[Remark 2 p. 1252]{BlochSrinivas}.

\begin{lem}
\label{lem:universaltrivialityofch0surfaces}
If $X$ is a smooth complex surface such that $CH_0(X) \cong \Z$ and such that $H^*(X,\Z)$ is torsion free, then $\H_0^S(X) = \Z$.
\end{lem}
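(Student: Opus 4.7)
The plan is to reduce to showing $CH_0(X_L)\cong\Z$ for every finitely generated extension $L/\cplx$, and to prove this via an integral decomposition of the diagonal. The reduction is immediate from the argument in the proof of Lemma~\ref{lem:ch0universal}: strict $\aone$-invariance forces $\H_0^S(X)$ to be determined by its sections on finitely generated extensions, and duality combined with the cancellation theorem identifies $\H_0^S(X)(\Spec L)$ with $CH_0(X_L)$ for smooth projective $X$.

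To produce the decomposition, I first invoke the theorem of Bloch--Srinivas: since $CH_0(X)\cong\Z$, there exist an integer $N\geq 1$, a divisor $D\subset X$, and cycles $Z_1$ supported on $X\times\{\mathrm{pt}\}$ and $Z_2$ supported on $D\times X$ with $N\cdot\Delta_X=Z_1+Z_2$ in $CH^2(X\times X)$. For a surface with torsion-free integral cohomology, the denominator $N$ can be cleared to yield an honest equality $\Delta_X=Z_1+Z_2$; this is precisely the content of the cited Remark~2 of Bloch--Srinivas. In outline, Mumford's theorem forces $h^{2,0}=0$ once $CH_0(X)\cong\Z$, so the Hodge-theoretic obstruction to lifting the $N$-th multiple vanishes, while torsion-freeness of $H^*(X,\Z)$ kills any remaining integral obstructions.

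Given the integral decomposition, I would base change to $L$ and apply the correspondences to a class $\alpha\in CH_0(X_L)$ with $\deg(\alpha)=0$. The diagonal acts as the identity, giving $(\Delta_{X_L})_*(\alpha)=\alpha$; the term $Z_{1,L}$ produces $\deg(\alpha)\cdot[\mathrm{pt}]=0$; and for $Z_{2,L}$, the elementary moving lemma for $0$-cycles on the smooth quasi-projective $L$-variety $X_L$ allows me to represent $\alpha$ by a cycle with support in $X_L\setminus D_L$, so the intersection with $Z_{2,L}\subset D_L\times X_L$ vanishes and $(Z_{2,L})_*(\alpha)=0$. Summing, $\alpha=0$ in $CH_0(X_L)$, which yields $CH_0(X_L)\cong\Z$ for every finitely generated $L/\cplx$.

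The main obstacle is the integral refinement of the Bloch--Srinivas decomposition in the second step; the rational decomposition is formal from $CH_0(X)_{\Q}=\Q$, but upgrading it to an equality in $CH^2(X\times X)$ requires both the surface hypothesis (so that the Hodge-theoretic obstruction is concentrated in $H^{2,0}$, which vanishes by Mumford) and the torsion-freeness of $H^*(X,\Z)$ (to eliminate the remaining integral obstructions).
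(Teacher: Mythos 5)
Your reduction to universal triviality of $CH_0$ via Lemma~\ref{lem:ch0universal} and your route through an integral Chow-theoretic decomposition of the diagonal are exactly what the paper does: the paper's proof is a one-line citation of \cite[Corollary 2.2]{VoisinCubic} (equivalently \cite[Proposition 1.9]{ACTP}), which is precisely the statement that a complex surface with $CH_0 \cong \Z$ and torsion-free integral cohomology admits an integral decomposition of the diagonal. Your last paragraph (deducing $CH_0(X_L)\cong\Z$ from the integral decomposition by pulling back the identity $\Delta = Z_1 + Z_2$ and invoking the moving lemma for $0$-cycles) is also the standard argument and is correct as written.

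The one place where you are loose is precisely the step you flag as the main obstacle, and it is worth being more careful there because the phrase ``Hodge-theoretic obstruction to lifting the $N$-th multiple'' does not quite describe the argument. The actual path is: from $CH_0(X)\cong\Z$, Mumford's theorem gives $p_g(X)=0$ \emph{and} the Albanese map forces $q(X)=0$ (you only mention $h^{2,0}$, but the vanishing of $q$ is also essential). Combined with torsion-freeness of $H^*(X,\Z)$ this kills $H^1(X,\Z)$ and $H^3(X,\Z)$, makes the cycle class map $\operatorname{Pic}(X)\to H^2(X,\Z)$ an isomorphism, and eliminates the $H^1\otimes H^3$ and $H^3\otimes H^1$ terms in the K\"unneth decomposition of $H^4(X\times X,\Z)$. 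One can then write the cohomology class of $\Delta$ \emph{integrally} as $[\mathrm{pt}\times X] + \sum_i [D_i\times D_i'] + [X\times\mathrm{pt}]$ with $D_i, D_i'$ algebraic, and a further analysis (using $q=0$ again on the correspondence supported over a curve, as in the Bloch--Srinivas remark you cite and in ACTP~1.9) upgrades the homological decomposition to a Chow-theoretic one. So your outline has the right ingredients, but ``$h^{2,0}=0$ removes the Hodge obstruction, torsion-freeness removes the rest'' compresses away the genuine use of $q=0$ and the Künneth/Picard analysis that make the integral refinement go through. Since this is exactly what the cited reference establishes, the cleanest fix is simply to cite \cite[Proposition 1.9]{ACTP} or \cite[Corollary 2.2]{VoisinCubic} at this step, as the paper does.
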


\begin{proof}
In light of Lemma \ref{lem:ch0universal}, this is an immediate consequence of \cite[Corollary 2.2]{VoisinCubic} or \cite[Proposition 1.9]{ACTP}.
\end{proof}

Recall that if $X \in \Sm_k$, then two points $x_0,x_1 \in X(k)$ are said to be elementarily $R$-equivalent if, setting $k[t]_{(0,1)}$ to be the semi-localization of $k[x]$ at $0$ and $1$, there exists an element of $x(t) \in X(k[t]_{(0,1)})$ such that $x(0) = x_0$ and $x(1) = x_1$.  Following Manin, we say that two elements of $X(k)$ are {\em $R$-equivalent} if they are equivalent for the equivalence relation generated by elementary $R$-equivalence; we write $X(k)/R$ for the set of $R$-equivalence classes of $k$-rational points.  If $X,Y \in \Sm_k$, then $(X \times Y)(k)/R \cong X(k)/R \times Y(k)/R$.

\begin{prop}
\label{prop:examples}
Suppose $k$ is a perfect field.
\begin{enumerate}[noitemsep,topsep=1pt]
\item Every stably $\aone$-connected smooth proper curve is $\aone$-connected.
\item For every integer $d \geq 2$, there exist stably $\aone$-connected smooth projective $\cplx$-varieties of dimension $d$ that are not $\aone$-connected.
\end{enumerate}
\end{prop}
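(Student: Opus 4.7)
The plan is to handle parts (1) and (2) separately; (2) is where the content lies.

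For part (1), I would start from the implication stably $\aone$-connected $\Rightarrow$ motivically connected of Lemma~\ref{lem:implications}(2), so that Lemma~\ref{lem:ch0universal} yields $CH_0(X_L) = \Z$ for every $L/k$. Specializing to $L = \kbar$ forces $X$ to be a smooth proper geometrically integral curve of genus zero: a disconnected geometric fiber would contribute extra summands to $CH_0(X_{\kbar})$, and a positive-genus component would contribute the non-trivial points of its Jacobian. Thus $X$ is a Brauer-Severi curve, and the surjection $CH_0(X) \twoheadrightarrow \Z$ provides a zero-cycle of degree $1$, so the index of the Brauer class of $X$ is $1$; hence $X \cong \Proj^1_k$, which is $\aone$-connected.

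For part (2) in dimension $d = 2$ I would take $S$ to be a Barlow surface over $\cplx$: a simply-connected smooth projective surface of general type with $p_g = q = 0$. Simply-connectedness forces $H^*(S,\Z)$ to be torsion-free, and Voisin's proof of Bloch's conjecture for Barlow surfaces gives $CH_0(S) = \Z$; Lemma~\ref{lem:universaltrivialityofch0surfaces} then makes $S$ motivically connected. Since $\cplx$ is perfect with vanishing $2$-cohomological dimension, Theorem~\ref{thm:main} upgrades this to stable $\aone$-connectedness of $S$. However $S$ is of general type, hence not uniruled, and in particular not rationally connected; since $\aone$-connected smooth proper complex varieties are rationally connected (by results of Asok-Morel), $S$ cannot be $\aone$-connected.

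For $d \geq 3$ I would set $X = S \times \Proj^{d-2}_{\cplx}$. The motive of a product is the tensor product of motives, and the Voevodsky tensor product of the unit with itself is the unit, so $X$ is still motivically connected and Theorem~\ref{thm:main} again yields stable $\aone$-connectedness. To see $X$ is not $\aone$-connected, I would use the identification $\bpi_0^{\aone}(Y)(L) \cong Y(L)/R$ for smooth proper $Y$ over $\cplx$ and finitely generated $L/\cplx$ together with the compatibility $(S \times \Proj^{d-2})(L)/R \cong S(L)/R \times \Proj^{d-2}(L)/R$ recorded in \S\ref{ss:mainexample}: non-triviality of $S(L)/R$ for some $L$ (forced by the failure of $\aone$-connectedness of $S$) persists to $X$.

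The main obstacle will be carefully citing the three non-trivial inputs to part (2): Voisin's resolution of Bloch's conjecture for Barlow surfaces (the cycle-theoretic half), the fact that $\aone$-connected smooth proper complex varieties are rationally connected (the geometric half), and the identification of $\bpi_0^{\aone}$ on smooth proper varieties with $R$-equivalence classes (needed to migrate the obstruction to higher dimensions). All three are standard but substantial; an alternative route that spreads out Parimala-style examples from non-closed fields to complex function fields would avoid Barlow surfaces but complicate the third input, so the Barlow route is cleaner and yields the minimal-dimensional example directly.
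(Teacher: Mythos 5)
Your proof is correct and follows essentially the same route as the paper's: for part (1) you reduce to a conic with a $0$-cycle of degree $1$ and conclude it is $\Proj^1_k$; for part (2) you take Barlow surfaces, invoke Voisin's resolution of Bloch's conjecture plus Lemma~\ref{lem:universaltrivialityofch0surfaces} and Theorem~\ref{thm:main}, then use the characterization of $\aone$-connectedness via $R$-equivalence and the product formula $(S\times X)(L)/R \cong S(L)/R \times X(L)/R$ to pass to higher dimensions. The only cosmetic divergence is in part (1), where the paper appeals to Suslin--Voevodsky's identification $\H_0^S(C)=\mathbf{Pic}(C)$ and then Springer's theorem, whereas you argue through $CH_0(X_{\kbar})$ and the index of the associated Brauer class; for conics these are equivalent.
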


\begin{proof}
For Point (1) suppose $C$ is a smooth projective curve.  By \cite[Theorem 3.1]{SuslinVoevodsky}, we know that $\H_0^S(C) = \mathbf{Pic}(C)$.  In particular, if $C$ has genus $> 0$ then $\mathbf{Pic}(C)(\bar{k}) = Pic(C_{\bar{k}})$, which is not isomorphic to $\Z$.  Thus, we can assume without loss of generality that $C$ has genus $0$, i.e., it is a conic.  In that case, since $\H_0^S(C) \cong \Z$, we conclude that $C$ contains a $0$-cycle of degree $1$, i.e., it has rational points of extensions of coprime degree.  In particular, it has a rational point after making an extension of odd degree.  Now any conic that has a rational point over an extension of odd degree is necessarily isomorphic to $\pone$ (this follows, e.g., from a theorem of Springer \cite[Theorem VII.2.7]{Lam}, but is much easier).  The fact that $\pone$ is $\aone$-connected can be seen in many ways; we leave this to the reader.

By Lemma \ref{lem:universaltrivialityofch0surfaces}, if $Y$ is any smooth proper surface with torsion free integral cohomology and $CH_0(S) \cong \Z$, then $Y$ is motivically connected.  Now, let $S$ be any smooth proper surface as in the previous sentence with Kodaira dimension $> -\infty$; the class of such surfaces is non-empty as, after the work of Voisin \cite[Corollary 2.5]{VoisinBC}, for example, it contains Barlow surfaces \cite{Barlow}.  By Theorem \ref{thm:main}, any such $S$ is stably $\aone$-connected as well.

On the other hand, a smooth proper variety $S$ is $\aone$-connected if and only if it is universally $R$-trivial by \cite[Theorem 2.4.3]{AM} (i.e., for every finitely generated separable extension $L/k$, the set $X(L)/R = \ast$).  In particular, $\aone$-connected varieties are rationally connected (more weakly, they have Kodaira dimension $-\infty$).  However, the $S$ from the previous paragraph has Kodaira dimension $> 0$ by assumption.

Finally, we produce examples of dimension $> 2$.  Since $S$ is not $\aone$-connected, there exists a finitely generated separable extension $L/k$ such that $S(L)/R \neq \ast$.  Then, for any $\aone$-connected variety $X$, since the map $S \times X \to S$ induces a bijection $(S \times X)(L)/R \to S(L)/R$, we conclude that $S \times X$ is not $\aone$-connected.  However, one again applying the weak Kunneth formula (as in the proof of Theorem \ref{thm:main}) we conclude that $\H_0^{S}(S \times X) \cong \H_0^S(S)$ as well.
\end{proof}

\begin{ex}
\label{ex:nonstablyrationalvarieties}
If $k$ is not algebraically closed, then there exist examples of $\aone$-connected smooth projective varieties that are not stably $k$-rational.  Suppose $T$ is a torus that is retract $k$-rational but not stably $k$-rational.  By \cite[Proposition 13]{CTSToresI}, any smooth compactification $X$ of $T$ is universally $R$-trivial.  Therefore by \cite[Corollary 2.4.4]{AM} we conclude that $X$ is $\aone$-connected and, by assumption, not stably rational.  On the other hand, such tori actually exist; see also \cite[Theorems 1.3-1.7]{HoshiYamasaki} and the references therein.
\end{ex}

\begin{rem}
Write $\Omega_{\pone}$ for the derived $\pone$-loop space functor on $\ho{k}$.  For any pointed space $(\mathscr{X},x)$, there are canonical morphisms $\mathscr{X} \to \Omega_{\pone} \Sigma_{\pone} \mathscr{X} \to \Omega_{\pone}^2 \Sigma_{\pone}^2 \mathscr{X} \to \cdots$.  We take $\mathscr{X} = X_+$ for a smooth proper $k$-scheme $X$.  To say that $X$ is stably $\aone$-connected is equivalent to saying that the map $\colim_n \Omega_{\pone}^n \Sigma_{\pone}^n X_+ \to \colim_n \Omega_{\pone}^n \Sigma_{\pone}^n \Spec k_+$ is an isomorphism after applying $\bpi_0^{\aone}(-)$.  If $X = \Spec k$, then the value of this colimit is actually achieved for $n = 2$ \cite[Theorem 6.43]{MField}.  Thus, given a stably $\aone$-connected smooth proper $k$-variety, it makes sense to ask whether there exists a finite integer $n$ such that $\Omega_{\pone}^n \Sigma_{\pone}^n X_+ \to \Omega_{\pone}^n \Sigma_{\pone}^n \Spec k_+$ becomes an isomorphism after applying $\bpi_0^{\aone}(-)$ for all $N \geq n$ (cf. \cite[Remark 2]{AHRequivalence} for closely related discussion).
\end{rem}

\begin{rem}
As a counter-point to Proposition \ref{prop:examples}, we recall that a smooth proper $k$-variety $X$ is $\aone$-connected if and only if $X$ is connected, $X(k)$ is non-empty, and the generic point $\eta \in X(k(X))$ is $R$-equivalent to $x$.  This result follows immediately by combining \cite[Theorem 2.4.3]{AM} and \cite[Theorem 8.5.1]{KahnSujathaLocalization}; the proof is a straightforward consequence of existence of specialization maps for $R$-equivalence classes (see, e.g., D. Madore \cite[Proposition 3.1]{Madore}, J. Koll\'ar \cite{KollarSpecialization}) or \cite[Lemma 6.2.3]{AM}).   We view this characterization of $\aone$-connectedness as analogous to the characterization of motivic connectedness in terms of a Chow-theoretic decomposition of the diagonal.
\end{rem}

\subsection{Birational invariance of some higher Chow groups}
\label{ss:complements}
Even though Theorem \ref{thm:main} shows that $\pi_0^{s\aone}(X)$ is no more refined of an invariant than $\H_0^S(X)$, it nevertheless highlights other witnesses to failure of universal triviality of $CH_0$.  Indeed, the functor $L \mapsto H^{2n+j,n+j}(X_L,\Z/2)$, which appears in the proof of Theorem \ref{thm:main} extends to a strictly $\aone$-invariant sheaf (with transfers) on $\Sm_k$ (this extension can be defined as the sheafification for the Nisnevich topology of $U \mapsto H^{2n+j,n+j}(U \times X,\Z/2)$); we will write $\mathbf{H}^{2n+j,n+j}(X,\Z/2)$ for this sheaf.  We observe, that these sheaves provide additional stable birational invariants of $X$.

One knows that motivic cohomology groups $H^{p,q}(X,\Z)$ of a smooth variety $X$ vanish for $p > q + \dim X$.  The line of groups $H^{p,q}$ with $p = q + \dim X$ need not vanish and appear in the proof of Theorem \ref{thm:main}.  We now observe that these groups are, quite generally, stable birational invariants of smooth proper varieties; we rephrase this in homological terms to eliminate dependence on dimension.

\begin{prop}
For any integer $j \geq 0$ and any commutative ring $R$, the motivic Borel-Moore homology group $H_{-j,-j}(X,R) := \hom_{\DM}(R(-j)[-j],\mathbf{M}(X))$ is a stable birational invariant of smooth proper $k$-varieties.
\end{prop}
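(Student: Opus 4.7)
The plan is to reduce the statement to two elementary invariances and then appeal to weak factorization. It suffices to establish, for $X$ a smooth proper $k$-variety: (a) a projective bundle invariance $H_{-j,-j}(X \times \pone, R) \cong H_{-j,-j}(X, R)$; and (b) a blow-up invariance $H_{-j,-j}(\widetilde{X}, R) \cong H_{-j,-j}(X, R)$ for any blow-up $\widetilde{X} \to X$ along a smooth closed subscheme $Z$. Granted (a) and (b), the weak factorization theorem of Abramovich-Karu-Matsuki-Wlodarczyk in characteristic zero yields stable birational invariance: two stably birational smooth proper varieties $X, Y$ satisfy that $X \times (\pone)^m$ is birational to $Y \times (\pone)^n$, and this birational equivalence can be realized as a zigzag of smooth blow-ups to which (a) and (b) apply.

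Both (a) and (b) reduce to a single vanishing statement. The projective bundle formula $\mathbf{M}(X \times \pone) \simeq \mathbf{M}(X) \oplus \mathbf{M}(X)(1)[2]$ and the blow-up formula $\mathbf{M}(\widetilde{X}) \simeq \mathbf{M}(X) \oplus \bigoplus_{i=1}^{c-1} \mathbf{M}(Z)(i)[2i]$ (with $c$ the codimension of the center $Z$) reduce the task to verifying
\[
\hom_{\DM}(R(-j)[-j],\, \mathbf{M}(Z)(i)[2i]) = 0 \quad \text{for every smooth proper } Z \text{ and every } i \geq 1.
\]
Tensoring the source against $R(-i)[-2i]$, this group equals $\hom_{\DM}(R(-j-i)[-j-2i], \mathbf{M}(Z))$; and Poincar\'e duality $\mathbf{M}(Z)^\vee \simeq \mathbf{M}(Z)(-e)[-2e]$ for $Z$ smooth proper of dimension $e$ identifies it with $H^{2e+j+2i,\, e+j+i}(Z, R)$. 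The standard vanishing of motivic cohomology $H^{p,q}$ of a smooth variety strictly above the top line $p = q + \dim$ now applies: here $p - (q + e) = i \geq 1$, so the group vanishes.

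The main subtlety is the appeal to weak factorization, which restricts the cleanest form of the above to characteristic zero. In arbitrary characteristic, one would instead try to factor the functor $\hom_{\DM}(R(-j)[-j], -)$ through Kahn-Sujatha's birational localization of $\DM$ (cf.\ the discussion preceding Corollary \ref{cor:aoneconnectedtrivialbirationalmotive}): the same vanishing shows that the Tate twists $\mathbf{M}(Z)(i)[2i]$ of smooth proper motives lie in the kernel of this functor, so it descends to the birational category, where stable birational invariance is built in. Either route thus hinges on the one top-line vanishing, which is the sole nontrivial input.
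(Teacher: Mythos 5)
Your computation of stable invariance is exactly the paper's: the projective bundle formula reduces to the vanishing of $\hom_{\DM}(R(-j)[-j],\mathbf{M}(Z)(i)[2i])$ for $i \geq 1$, which via cancellation and duality becomes a motivic cohomology group of $Z$ strictly above the top line $p = q + \dim Z$, and that vanishing is standard. Where you diverge is on birational invariance. The paper cites an ``evident modification of the usual proof by the action of correspondences'' (\cite[Lemma 16.1.11]{Fulton}): one closes the graph of a birational map $X \dashrightarrow Y$, resolves, and uses localization to show the two pushforwards compose to the identity. You instead combine the blow-up formula with the Abramovich--Karu--Matsuki--W\l{}odarczyk weak factorization theorem. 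Both routes rest on the same single vanishing statement and both yield the result in characteristic $0$; the correspondence argument is the more classical and self-contained of the two (it is the textbook argument for birational invariance of $CH_0$ and avoids invoking weak factorization as a black box), while your route is cleaner in that it makes the role of blow-ups explicit but is pinned to characteristic $0$ by the factorization theorem, as you note. Your suggested fallback of factoring the functor through the Kahn--Sujatha birational localization of $\DM$ is the natural fix and is in the spirit of Corollary~\ref{cor:aoneconnectedtrivialbirationalmotive}; keep in mind, though, that both the graph-closure argument and the Kahn--Sujatha construction ultimately also lean on resolution of singularities, so none of these three routes is genuinely characteristic-free. The argument is correct.
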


\begin{proof}
We use various standard properties of motivic cohomology; see \cite[\S 16]{MVW} for details.  By definition: $H_{-j,-j}(X \times {\mathbb P}^n) = \hom_{\DM_k}(R(-j)[-j]),\mathbf{M}(X \times {\mathbb P}^n)$.  By the projective bundle formula $\hom_{\DM_k}(R(-j)[-j]),\mathbf{M}(X \times {\mathbb P}^n) \cong \hom_{\DM_k}(R(-j)[-j],\oplus_{i=0}^n \mathbf{M}(X)(i)[2i])$.  Again by Voevodsy's cancellation theorem \cite{VCancellation}, we know that $\hom_{\DM_k}(R(-j)[-j],\mathbf{M}(X)(i)[2i]) \cong \hom_{\DM_k}(R,\mathbf{M}(X)(i+j)[2i+j])$ and by duality, it is easy to see that this group vanishes if $i > 0$.

To conclude, it suffices to observe that $H_{-j,-j}(X,R)$ is a birational invariant, and this follows by an evident modification of the usual proof by the action of correspondences (\cite[Lemma 16.1.11]{Fulton}).
\end{proof}

We can sheafify these groups for the Nisnevich topology: simply consider $\mathbf{H}_{-j,-j}(X) := H_0(\mathbf{M}(X)(j)[j])$; these groups can also be defined as the sheafification for the Nisnevich topology of the presheaf $U \mapsto H^{2\dim X + j,\dim X + j}(U \times X,\Z)$.

\begin{prop}
Suppose $X$ is a smooth proper variety.  If $\H_0^S(X) \to \Z$, then for every $j \geq 0$, the map $\mathbf{H}_{-j,-j}(X) \to \mathbf{H}_{-j,-j}(\Spec k) = \K^M_j$ is an isomorphism.
\end{prop}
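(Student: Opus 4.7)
The plan is to identify the sheaf $\mathbf{H}_{-j,-j}(X) = H_0(\mathbf{M}(X) \otimes \Z(j)[j])$ directly by applying the weak K\"unneth formula in $\DM_k$, exactly as in the proof of Theorem~\ref{thm:main} but with integral coefficients in place of $\Z/2$-coefficients.

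First I would recall that the motivic complex $\Z(j)[j]$ is $(-1)$-connected in the homological $t$-structure on $\DM_k$, with canonical identification $H_0(\Z(j)[j]) \cong \K^M_j$. Indeed, on a field $K$ one has Voevodsky's vanishing $H^{p,q}(\Spec K, \Z) = 0$ for $p > q$ together with the Nesterenko--Suslin--Totaro identification $H^{j,j}(\Spec K, \Z) \cong K^M_j(K)$; since the presheaves involved are unramified and strictly $\aone$-invariant, these pointwise statements upgrade to the claimed identification at the level of sheaves with transfers.

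Since $\mathbf{M}(X)$ is $(-1)$-connected with $H_0(\mathbf{M}(X)) = \H_0^S(X)$, and $\Z(j)[j]$ is $(-1)$-connected with $H_0(\Z(j)[j]) = \K^M_j$, the weak K\"unneth formula used in the proof of Theorem~\ref{thm:main} (the transposition of \cite[Proposition 3.3.9]{AWW} to $\DM_k$) produces a natural isomorphism
\[
\mathbf{H}_{-j,-j}(X) \;=\; H_0(\mathbf{M}(X) \otimes \Z(j)[j]) \;\cong\; \H_0^S(X) \otimes^{\aone} \K^M_j.
\]
The hypothesis $\H_0^S(X) \cong \Z$ collapses the right-hand side to $\K^M_j$, and naturality of the K\"unneth identification with respect to the structure morphism $X \to \Spec k$ ensures this isomorphism coincides with the canonical comparison map $\mathbf{H}_{-j,-j}(X) \to \mathbf{H}_{-j,-j}(\Spec k)$.

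The only real subtlety is the one already encountered in the proof of Theorem~\ref{thm:main}: verifying that the weak K\"unneth formula proved in \cite{AWW} for $\ho{k}$ transposes verbatim to $\DM_k$ with the appropriate connectivity hypotheses. Once this formalism is in place, the integral variant needed here behaves identically to the mod-$2$ variant used earlier, and the argument is essentially immediate; no analog of the spectral sequence or cohomological dimension argument from Theorem~\ref{thm:main} is required.
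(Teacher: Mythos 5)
Your proposal is correct and follows the same route as the paper: deduce $\mathbf{H}_{-j,-j}(X) = H_0(\mathbf{M}(X)(j)[j]) \cong \H_0^S(X) \otimes^{\aone} \K^M_j$ from the weak K\"unneth theorem (transposing \cite[Proposition 3.3.9]{AWW} to $\DM_k$) and then collapse via the hypothesis $\H_0^S(X) \cong \Z$. You spell out the identification $H_0(\Z(j)[j]) \cong \K^M_j$ and the naturality of the K\"unneth isomorphism in more detail than the paper, but the argument is the same.
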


\begin{proof}
Since $\mathbf{H}_{-j,-j}(X) := H_0(\mathbf{M}(X)(j)[j])$, it follows from the weak K\"unneth theorem (see the proof of Theorem \ref{thm:main}; one adapts the proof in \cite[Proposition 3.3.9]{AWW}) that $H_0(\mathbf{M}(X)(j)[j]) \cong \H_0^S(X) \tensor^{\aone} \H_0(\Z(j)[j]) \cong \H_0^S(X) \tensor^{\aone} \K^M_j$.
\end{proof}

\begin{rem}
Assume one is working over an algebraically closed base field $k$.  There is a cycle class map $H^{2d + j,d+j}(X,\Z)$ is $H^{2d+j}_{\et}(X,\Z\ell(d+j))$.  If $j > 0$, this map is zero for dimensional reasons.  Therefore, if $\mathbf{H}_{-j,-j}(X)$ is a non-trivial invariant, it detects arithmetic as opposed to topological information.
\end{rem}

We conclude with the following example, which shows that the groups $H_{-1,-1}(X)$ actually do naturally arise.

\begin{ex}
The groups $\H_{-1,-1}(X)$ can also be identified as $H^d(X,\K^Q_{d+1}) = H^{d}(X,\K^M_{d+1})$ for a smooth variety of dimension $d$ (here $\K^Q_{i}$ is the sheafification of the Quillen K-theory presheaf on $\Sm_k$ for the Nisnevich topology).  In this guise, they appear in the very definition of norm varieties and thus the proof of the Bloch-Kato conjecture \cite[Theorem 0.1]{SuslinJoukhovitski}.
\end{ex}

\begin{footnotesize}
\bibliographystyle{alpha}
\bibliography{stableA1connectedness}
\end{footnotesize}
\Addresses
\end{document}